\theoremstyle{plain}
\newtheorem{prop}{Proposition}
\theoremstyle{definition}
\newtheorem{defn}{Définition}
\newtheorem{exmp}{Example}
\theoremstyle{remark}
\newtheorem*{rem}{Remarque}
\newcommand{\N}{\mathbb{N}}
\newcommand{\C}{\mathbb{C}}
\title{Résolution des systèmes polynomiaux: \\un solveur basé sur les matrices de Bezout.}
\author{Jean-Paul Cardinal\\
\texttt{cardinal@math.univ-paris13.fr}}
\begin{document}

\maketitle

\begin{abstract}
Nous proposons un algorithme de calcul numérique des racines d'un système polynomial en intersection complète. Cet algorithme utilise les matrices de Bézout et ne fait appel qu'à des procédures d'algèbre linéaire. Il est possible d'éxécuter l'ensemble des calculs en arithmétique flottante. Une implémentation en Numpy/Octave/Sage est publiée sur le site \cite{jp_code}.
\end{abstract}

\section{Cas d'une variable}
\label{univariable}
Rappelons quelques faits connus sur les polynômes à une variable.\\
Dans toute cette partie nous considérons un polynôme $f = a_0x^d + \dots + a_{d-1}x + a_d$ à coefficients dans $\C$. Notons $\langle f \rangle$ l'idéal engendré par $f$ dans l'anneau de polynômes $\C[x]$ et $A = \C[x]/\langle f \rangle$  son algèbre quotient. Dorénavant $x$ désignera indifféremment la variable $x$, sa projection sur le quotient $A$ ou l'endomorphisme de multiplication par $x$ dans $A$. Une base du $\C$-espace vectoriel $A$ est la {\bf base des monômes} $\bold{x} = (1, x,\cdots, x^{d-1})$.

\subsection{Matrices des opérateurs de multiplication dans $A$}
L'opérateur de multiplication
$x : \left\vert
\begin{array}{c}
A \mapsto A \\
h \mapsto xh
\end{array}
\right.$
est un endomorphisme et se représente donc dans la base des monômes par la matrice $X$ de taille $d$, appelée usuellement {\bf matrice compagnon}
\begin{equation}
\label{compan}
X =
\begin{bmatrix}
	0 & \cdots & 0 & -a_d/a_0 \\
	1 & 0 & \cdots & -a_{d-1}/a_0 \\
	\vdots  & \ddots  & \ddots & \vdots  \\
	0 & \cdots & 1 & -a_1/a_0
\end{bmatrix}
\end{equation}

Nous avons la proposition classique suivante:
\begin{prop}
\label{compan2roots}
La matrice compagnon admet $f$ comme polynôme caractéristique et comme polynôme minimal, c'est-à-dire que l'on a $f(X) = 0$. De plus les racines du polynôme $f$ sont les valeurs propres de $X$, comptées avec les mêmes multiplicités.
\end{prop}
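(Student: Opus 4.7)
Le plan est d'exploiter directement le fait que $X$ est la matrice de l'endomorphisme de multiplication par $x$ dans $A = \C[x]/\langle f \rangle$, exprimée dans la base monomiale $\bold{x} = (1, x, \ldots, x^{d-1})$.

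Je commencerais par établir l'identité matricielle $f(X) = 0$ par transport. Comme $X^k$ représente la multiplication par $x^k$ dans $A$, et comme la correspondance \og opérateur $\mapsto$ matrice dans la base $\bold{x}$ \fg{} est $\C$-linéaire et multiplicative, toute évaluation $p(X)$ représente l'opérateur de multiplication par $p(x)$. En particulier $f(X)$ représente la multiplication par $f(x)$, qui est l'opérateur nul puisque $f(x) = 0$ dans le quotient $A$. D'où $f(X) = 0$.

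J'identifierais ensuite le polynôme minimal $\mu_X$ de $X$. Comme $f(X) = 0$, on a déjà $\mu_X \mid f/a_0$. Réciproquement, en évaluant l'identité $\mu_X(X) = 0$ sur le vecteur $e_1 \in \C^d$ qui représente $1 \in A$ dans la base $\bold{x}$, on obtient $\mu_X(x) \cdot 1 = 0$ dans $A$, c'est-à-dire $f \mid \mu_X$ dans $\C[x]$. Les deux divisibilités imposent $\mu_X = f/a_0$, de degré exactement $d$. Le polynôme caractéristique $\chi_X$, étant unitaire, de degré $d$, et divisible par $\mu_X$ qui est lui aussi unitaire de degré $d$, coïncide alors avec $\mu_X = f/a_0$. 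Les racines de $\chi_X$ sont par définition les valeurs propres de $X$ comptées avec multiplicité algébrique, et ce sont exactement les racines de $f$ avec les mêmes multiplicités, puisque le facteur $1/a_0$ n'altère ni l'ensemble des racines ni leurs ordres.

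La seule subtilité se trouve dans l'implication $\mu_X(X) = 0 \Rightarrow f \mid \mu_X$~: il faut observer que l'opérateur $p(X)$ est déterminé par son action sur le vecteur $e_1$, car les itérés $e_1, X e_1, \ldots, X^{d-1} e_1$ reconstituent précisément la base monomiale $\bold{x}$. C'est ce qui autorise à remonter de l'annulation matricielle à l'annulation dans l'algèbre quotient $A$, et qui constitue véritablement le cœur de la démonstration; tout le reste n'est que lecture de définitions.
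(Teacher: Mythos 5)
Votre démonstration est correcte : l'identification $f(X)=0$ par transport de structure, puis l'encadrement $\mu_X \mid f/a_0$ et $f \mid \mu_X$ via l'action sur le vecteur $e_1$ représentant $1\in A$, et enfin l'égalité avec le polynôme caractéristique par comparaison des degrés, constituent l'argument standard et complet. Notez que l'article énonce cette proposition comme classique sans en donner de preuve; votre rédaction ne peut donc être comparée à rien dans le texte, mais elle comble ce vide proprement, y compris sur le point de normalisation par $a_0$ que l'énoncé passe sous silence.
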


\begin{rem}
La proposition précédente fournit une méthode efficace de calcul numérique des racines de $f$. En effet, $X$ est une matrice de Hessenberg, à laquelle on peut appliquer de performantes techniques de calcul de valeurs propres, comme la méthode QR. Nous verrons à la section \ref{multivariable} que ces techniques peuvent aussi s'appliquer au cas d'un système multivariable en intersection complète.
\end{rem}

Plus généralement, pour tout élément $g\in A$, l'opérateur de multiplication
$g : \left\vert
\begin{array}{c}
A \mapsto A \\
h \mapsto gh
\end{array}
\right.$
est un endomorphisme, et se représente dans la base des monômes par une matrice appelée encore matrice compagnon de $g$ et qui se calcule facilement à partir de $X$. Considérons par exemple $g = x^2$. L'opérateur de multiplication par $x^2$ n'est autre que le carré de l'opérateur de multiplication par $x$; sa matrice est donc $X^2$. D'une façon générale, nous avons donc
\begin{prop}
la matrice compagnon de $g$ est $g(X)$.
\end{prop}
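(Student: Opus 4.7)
The plan is to exploit the fact that the assignment
$\Phi : A \to M_d(\C)$ sending an element $g \in A$ to the matrix (in the monomial basis $\boldsymbol{x}$) of the multiplication-by-$g$ endomorphism is a unital $\C$-algebra homomorphism. Once this is in place, the equality $\Phi(g) = g(X)$ is immediate: for $g = \sum_{k=0}^{d-1} b_k x^k$, applying $\Phi$ term by term and using $\Phi(x) = X$ from the definition of the companion matrix gives $\Phi(g) = \sum_k b_k X^k = g(X)$.

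Concretely, I would first recall that multiplication in the commutative algebra $A$ yields the regular representation $g \mapsto (h \mapsto gh)$, which is manifestly an algebra morphism from $A$ into $\mathrm{End}_\C(A)$ (it is $\C$-linear, sends $1$ to the identity, and $g_1 g_2$ to the composition of the two multiplications). Composing with the coordinate isomorphism $\mathrm{End}_\C(A) \simeq M_d(\C)$ attached to the basis $\boldsymbol{x}$ gives $\Phi$, which therefore inherits the same three properties. The example $g = x^2$ mentioned in the text is precisely an instance of this: multiplication by $x^2$ is the square of multiplication by $x$, so its matrix is $X^2$.

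The one subtle point, which I would handle explicitly, is well-definedness: the expression $g(X)$ a priori depends on a chosen polynomial representative of the class $g$, so one must check it is constant on $\langle f \rangle$-cosets. This is where Proposition~\ref{compan2roots} intervenes: if $g - g' = f q$ in $\C[x]$, then $g(X) - g'(X) = f(X) q(X) = 0$, since $f(X) = 0$. Hence $g(X)$ descends to a matrix attached to the class of $g$ in $A$, and the morphism property of $\Phi$ yields the claimed identity.

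I do not anticipate any real obstacle: the statement is essentially the functoriality of polynomial evaluation at a commuting element, and the only step with genuine content is invoking $f(X) = 0$ to pass from $\C[x]$ to $A = \C[x]/\langle f \rangle$.
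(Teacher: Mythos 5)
Your argument is correct and follows essentially the same route the paper intends: the multiplication-by-$g$ map is the regular representation, hence an algebra morphism sending $x$ to $X$ (this is exactly the generalization of the paper's $g=x^2$ example), and well-definedness is settled by $f(X)=0$ exactly as in the paper's remark following the proposition. Nothing is missing; your version is simply a more explicit write-up of the paper's implicit proof.
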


\begin{rem}
Il faut noter que si $g_1, g_2$ sont deux représentants de $g$ on a $g_1(X) = g_2(X)$, ce qui définit $g(X)$ sans ambiguité, indépendamment du représentant choisi.
\end{rem}

\subsection{Polynômes et matrices de Bezout}
\begin{defn}
\label{def_bez}
Introduisons une nouvelle variable $y$.
Pour tout polynôme $g$, on définit le {\bf polynôme de Bezout $\delta(g)$} et la {\bf matrice de Bezout $B(g) = [b_{\alpha\beta}]$}  par les formules
\begin{equation}
\delta(g) = \dfrac{f(x)g(y)-f(y)g(x)}{x-y} = \sum_{\alpha,\beta = 0, \cdots, m-1} b_{\alpha\beta} x^\alpha y^\beta
\end{equation}
où $m$ désigne n'importe quel entier supérieur ou égal au maximum des degrés de $f$ et $g$.
\end{defn}

\begin{exmp}
Pour $f = x^2 - 3x + 2$, et $g = x^3$ on a les polynômes de Bezout $\delta(1) = -3 + x + y$ et $\delta(x^3) = -2x^2 - 2xy -2y^2 + 3x^2y + 3xy^2 -x^2y^2$ qui, représentés sous forme de tableaux, font apparaitre les matrices de Bezout $B(1)$ et $B(x^3)$
$$
\begin{array}{c|ccc}
\delta(1) & 1 & y & y^2\\
\hline
1 & -3 & 1 & 0\\
x & 1 & 0 & 0\\
x^2 & 0 & 0 & 0
\end{array}
\hspace{1cm}
\begin{array}{c|ccc}
\delta(x^3) & 1 & y & y^2\\
\hline
1 & 0 & 0 & -2\\
x & 0 & -2 & 3\\
x^2 & -2 & 3 & -1
\end{array}
$$
\end{exmp}

\begin{rem}
Le polynôme et la matrice de Bezout sont liés par l'égalité matricielle
\begin{equation}
	\label{pmB}
	\delta(g) = \bold{x} B(g) \bold{y}^T
\end{equation}
où $\bold{x} = (1, x,\cdots, x^{m-1})$ et $\bold{y} = (1, y,\cdots, y^{m-1})$ sont des vecteurs de monômes de $\C[x]$ et $\C[y]$. (Attention, on emploie encore ici la notation ${\bold x}$ pour un vecteur de $\C[x]^m$, notation qui était utilisée précédemment pour désigner la base des monômes ${\bold x}$; en pratique cette confusion n'est pas gênante).
\end{rem}
Considérons maintenant les produits $\bold{x}B(1)$ et $\bold{x}B(g)$. Ces deux familles sont constituées des colonnes de $B(1)$, resp. $B(g)$, vues comme des polynômes en $x$ exprimés dans la base des monômes. On peut voir aussi $\bold{x}B(1)$, resp. $\bold{x}B(g)$, comme la famille des coefficients du polynôme $\delta(1)$, resp. $\delta(g)$, vu comme un polynôme en $y$ à coefficients dans $\C[x]$.

\begin{prop}
\label{relations_prop}
Soit $g$ un polynôme de $\C[x]$, et $m$ le maximum des degrés de $f$ et $g$. Si on écrit $B(1)$ et $B(g)$ dans le même système d'indice $\bold{x} = (1, x,\cdots, x^{m-1})$ et $\bold{y} = (1, y,\cdots, y^{m-1})$, alors dans $A^m$ on a
\begin{equation}
\label{relations}
	\bold{x}B(1)g = \bold{x}B(g)
\end{equation}
\end{prop}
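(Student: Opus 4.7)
Le plan consiste à comparer directement les polynômes $\delta(1)\cdot g(x)$ et $\delta(g)$ vus dans $\C[x,y]$, à montrer que leur différence est un multiple explicite de $f(x)$, puis à lire les coefficients en $y$ à travers la relation matricielle~\eqref{pmB}. La nullité modulo $f(x)$ dans $A[y]$ se traduira alors composante par composante en l'égalité souhaitée dans $A^m$.

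Le cœur de l'argument est une simplification algébrique directe~: en partant des définitions et en mettant au même dénominateur $x-y$, les deux termes en $f(y)g(x)$ se compensent et il reste
\[
\delta(1)\cdot g(x) - \delta(g) \;=\; f(x)\cdot\frac{g(x)-g(y)}{x-y}.
\]
Le second facteur est un vrai polynôme de $\C[x,y]$ puisque $g(x)-g(y)$ s'annule en $x=y$. Par conséquent la différence ci-dessus est divisible par $f(x)$ dans $\C[x,y]$, donc s'annule dans $A[y]$.

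Il reste alors à identifier les coefficients en $y$. D'après~\eqref{pmB}, le coefficient de $y^\beta$ dans $\delta(g)$ (resp. dans $\delta(1)\cdot g$) est précisément la $\beta$-ième composante du vecteur ligne $\mathbf{x} B(g)$ (resp. $\mathbf{x} B(1)\,g$)~; l'annulation précédente fournit donc composante par composante l'identité~\eqref{relations}. Je ne vois pas d'obstacle véritable dans cette preuve~: il suffit de s'assurer que $B(1)$ et $B(g)$ sont écrites dans un même système d'indices de taille $m\times m$, ce qui est exactement l'hypothèse de l'énoncé et garantit que $\delta(1)$ et $\delta(g)$ sont bien des polynômes de degré $<m$ en chaque variable.
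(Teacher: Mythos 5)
Votre preuve est correcte et suit essentiellement la même démarche que celle du papier~: l'identité $\delta(g)=g(x)\,\delta(1)-f(x)\,\frac{g(x)-g(y)}{x-y}$ (que vous écrivez sous forme réarrangée) est exactement la décomposition utilisée dans la démonstration originale, suivie de la même identification des coefficients en $y^\beta$ et de la projection sur $A$.
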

\begin{proof}
Ecrivons
\begin{align} \nonumber
	\delta(g) = g(x)\dfrac{f(x)-f(y)}{x-y} - f(x)\dfrac{g(x)-g(y)}{x-y} \\ \nonumber
	\delta(g) = g(x)\delta(1) - f(x)\dfrac{g(x)-g(y)}{x-y}
\end{align}
Regardons cette dernière égalité comme une égalité entre polynômes en la variable $y$, à coefficients dans $\C[x]$. Si $h\in \C[x][y]$ et $\beta\in\N$ notons $h_\beta$ le coefficient de $y^\beta$ dans $h$. On a alors
$$\delta(g)_\beta = g(x)\delta(1)_\beta - f(x)(\dfrac{g(x)-g(y)}{x-y})_\beta $$
qui est une égalité entre éléments de $\C[x]$. En projetant sur $A$ on a
$\delta(g)_\beta = g(x)\delta(1)_\beta$
et comme ceci est vrai pour tout $\beta\in\N$, on obtient bien la relation~(\ref{relations}).
\end{proof}

\begin{rem}
En disant la  proposition autrement, chaque colonne de $B(1)$ donne, lorsqu'elle est multipliée par $g$ modulo $A$, la colonne de même indice de $B(g)$.
\end{rem}

\begin{exmp}
Reprenant l'exemple précédent, la proposition \ref{relations_prop} dit que, modulo $A$, on a les égalités $(-3 + x)x^3 = -2x^2$, $(1)x^3 = -2x + 3x^2$, $(0)x^3 = -2 + 3x - x^2$, qui se vérifient facilement.
\end{exmp}

\begin{rem}
En considérant les lignes de $B(1), B(x)$ à la place des colonnes on aboutirait à une formule écrite en la variable $y$, identique à la formule~(\ref{relations}) car les matrices de Bezout sont ici symétriques, ce qui ne sera plus le cas en plusieurs variables.
\end{rem}

\subsection{Lien entre matrices de Bezout et matrices compagnon}
\label{Bar}
Particulièrement importantes sont les matrices de Bezout $B(1)$ et $B(x)$
\begin{equation}
	\begin{array}{c|cccc}
		\delta(1) & 1 & y & \dots & y^{d-1} \\
		\hline
		1 & a_{d-1} & \ldots & \dots & a_0 \\
		x & a_{d-2} & \dots & a_0 & 0 \\
		\vdots & \vdots & \vdots & \vdots & \vdots \\
		x_{d-1} & a_0 & 0 & \ldots & 0 \\
	\end{array}
	\hspace{1.5cm}
	\begin{array}{c|cccc}
		\delta(x) & 1 & y & \dots & y^{d-1} \\
		\hline
		1 & -a_{d} & 0 & \dots & 0 \\
		x & 0 & a_{d-2} & \ldots & a_0 \\
		\vdots & \vdots & \vdots & \vdots & \vdots \\
		x_{d-1} & 0 & a_0 & \ldots & 0 \\
	\end{array}
\end{equation}
en effet nous avons le lien suivant entre matrice de Bezout et la matrice compagnon
\begin{prop}
\label{Barnett}
La matrice compagnon $X$ peut se calculer grâce à la {\bf formule de Barnett}
\cite{Barnett}
\begin{equation}
	B(x)B(1)^{-1} = X
\end{equation}
\end{prop}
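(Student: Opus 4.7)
Le plan est d'appliquer la proposition~\ref{relations_prop} au choix particulier $g = x$, ce qui fournit immédiatement dans $A^d$ l'égalité $\mathbf{x}B(1)\,x = \mathbf{x}B(x)$. Il ne restera ensuite qu'à réinterpréter le membre de gauche pour y faire apparaître la matrice compagnon $X$.

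Je commencerais par observer que, pour toute matrice $M \in \C^{d\times d}$, le vecteur ligne $\mathbf{x}M$ vu dans $A^d$ vérifie $(\mathbf{x}M)\cdot x = \mathbf{x}(XM)$. En effet, multiplier chaque composante par $x$ revient à remplacer $\mathbf{x}$ par $(x, x^2, \ldots, x^d)$; or, dans $A^d$, ce dernier vecteur coïncide avec $\mathbf{x}X$ par définition même de la matrice compagnon, la dernière colonne de $X$ encodant précisément la réduction de $x^d$ modulo $f$.

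En appliquant cette observation à $M = B(1)$ et en la combinant avec la proposition~\ref{relations_prop}, j'obtiens $\mathbf{x}\,(XB(1)) = \mathbf{x}\,B(x)$ dans $A^d$. Comme $\mathbf{x} = (1, x, \ldots, x^{d-1})$ est une base de $A$, les coordonnées d'un élément y sont uniques, et l'on en déduit l'identité matricielle $XB(1) = B(x)$ à coefficients dans $\C$. Il reste à vérifier l'inversibilité de $B(1)$: sa structure antitriangulaire, rappelée juste avant l'énoncé, montre que $a_0$ occupe toute son antidiagonale, d'où $\det B(1) = \pm a_0^d \neq 0$, et la formule annoncée $X = B(x)B(1)^{-1}$ en résulte.

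Le point qui me semble demander le plus d'attention est la deuxième étape: il faut distinguer soigneusement le produit formel $\mathbf{x}B(1)$ vu comme vecteur de $\C[x]^d$ et le même produit vu dans $A^d$, puis justifier que la multiplication par $x$ composante par composante dans $A$ correspond exactement à l'action à gauche de $X$ sur la matrice des coordonnées. Une fois cette traduction correctement formulée, le reste est purement formel.
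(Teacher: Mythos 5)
Votre démonstration est correcte et repose sur le même ingrédient central que celle du texte, à savoir la proposition~\ref{relations_prop} appliquée à $g = x$, qui donne $\mathbf{x}B(1)\,x = \mathbf{x}B(x)$ dans $A^d$. La différence tient à l'exploitation de cette relation : le texte introduit la base de Horner $\hat{\mathbf{x}} = \mathbf{x}B(1)$, observe que l'endomorphisme de multiplication par $x$ a pour matrice $B(1)^{-1}B(x)$ dans cette base, puis conjugue par $B(1)$ pour revenir à la base des monômes; vous restez au contraire entièrement dans la base des monômes, en établissant l'identité d'entrelacement $(\mathbf{x}M)x = \mathbf{x}(XM)$ (qui n'est rien d'autre que la définition de $X$ lue sur les coordonnées), puis en identifiant les coordonnées dans la base $\mathbf{x}$ pour obtenir directement $XB(1) = B(x)$, d'où la formule après inversion. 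Les deux arguments sont de longueur comparable; le vôtre est un peu plus élémentaire (il n'est pas nécessaire de savoir que $\hat{\mathbf{x}}$ est une base) et a le mérite de justifier explicitement l'inversibilité de $B(1)$ par sa structure anti-triangulaire avec $a_0 \neq 0$ sur toute l'antidiagonale, point que le texte admet sans commentaire; la voie du texte fournit en contrepartie l'interprétation de $B(1)^{-1}B(x)$ comme matrice de multiplication dans la base de Horner, interprétation qui est réutilisée ensuite dans la remarque du cas multivariable. Votre dernier paragraphe identifie d'ailleurs correctement le seul point délicat (distinguer le produit formel dans $\C[x]^d$ de sa projection dans $A^d$), et votre rédaction le traite proprement.
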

\begin{proof}
Définissons deux nouveaux familles dans $A$ par
\begin{equation}
	\begin{array}{lll}
		\bold{x}B(1) & = & (a_{d-1} + a_{d-2}x + \cdots + a_0x^{d-1}, \cdots, a_1 + a_0x,  a_0).\\
		\bold{x}B(x) & = & (-a_d, a_{d-2}x + \cdots + a_0x^{d-1}, \cdots, a_0x)
	\end{array}
\end{equation}
et posons $\hat{\bold{x}} = \bold{x}B(1)$.
$B(1)$ étant inversible, la famille $\hat{\bold{x}}$ est une base de $A$ appellée {\bf base de Horner}.
D'après la proposition \ref{relations_prop} on a $\hat{\bold{x}}x = \bold{x}B(1)$. Par construction, les familles $\hat{\bold{x}}$ et $\hat{\bold{x}}x$ s'expriment dans la base $\bold{x}$ (des monômes) respectivement par les matrices $B(1)$ et $B(x)$.
La famille $\hat{\bold{x}}x$ s'exprime donc dans la base $\hat{\bold{x}}$ (de Horner) par la matrice $B(1)^{-1}B(x)$ ce qui veut dire que l'endomorphisme
$x : \left\vert
\begin{array}{c}
A \mapsto A \\
h \mapsto xh
\end{array}
\right.$ a pour matrice $B(1)^{-1}B(x)$ dans la base $\hat{\bold{x}}$
et pour matrice $B(1)(B(1)^{-1}B(x))B(1)^{-1} = B(x)B(1)^{-1}$ dans la base $\bold{x}$.
\end{proof}

\subsection{Formule de Barnett généralisée}
\label{Bar_gen}
La formule de Barnett a été écrite en considérant les matrices de Bezout des polynômes $1$ et $x$.
Si on considère un polynôme quelconque $g$ de $\C[x]$ et $B(g)$ sa matrice de Bezout il serait naturel d'avoir entre les matrices de Bezout $B(1)$ et $B(g)$ la relation suivante, que nous appellerons {\bf formule de Barnett généralisée}
\begin{equation}
	\label{BG}
	B(g)B(1)^{-1} = g(X)
\end{equation}
On montre facilement la formule~(\ref{BG}) lorsque le degré de g est inférieur ou égal à $d$, c'est-à-dire lorsque $B(1)$ et $B(g)$ sont de même taille. Par exemple pour $f = x^2 - 3x + 2$, $d = 2$, on a
$$
\begin{array}{c|cc}
	\delta(1) & 1 & y \\
	\hline
	1 & -3 & 1 \\
	x & 1 & 0
\end{array}
\hspace{1cm}
\begin{array}{c|cc}
	\delta(x) & 1 & y \\
	\hline
	1 & -2 & 1 \\
	x & 1 & 0
\end{array}
\hspace{1cm}
\begin{array}{c|cc}
	\delta(x^2) & 1 & y \\
	\hline
	1 & 0 & -2 \\
	x & -2 & 3
\end{array}
$$
\begin{equation}
	B(x)B(1)^{-1} =
	\begin{bmatrix}
		0 & -2 \\
		1 & 3
	\end{bmatrix}
	= X
	\hspace{1cm}
	B(x^2)B(1)^{-1} =
	\begin{bmatrix}
		-3 & -6 \\
		2 & 7
	\end{bmatrix}
	= X^2
\end{equation}
ce qui confirme bien la formule~(\ref{BG}).
Si par contre le degré de $g$ est supérieur à $d$, alors $B(g)$ et $B(1)$ ne sont plus de la même taille et une opération telle que $B(g)B(1)^{-1}$ n'a plus de sens. Une première idée est de réécrire les deux matrices de Bezout dans le même système d'indices, à savoir
$\bold{x} = (1, x,\cdots, x^{m-1})$ et $\bold{y} = (1, y,\cdots, y^{m-1})$, $m$ étant le degré de $g$. Par exemple en choisissant $f$ comme ci-dessus et $g = x^3$ on aurait
$$
\begin{array}{c|ccc}
\delta(1) & 1 & y & y^2\\
\hline
1 & -3 & 1 & 0\\
x & 1 & 0 & 0\\
x^2 & 0 & 0 & 0
\end{array}
\hspace{1cm}
\begin{array}{c|ccc}
\delta(x^3) & 1 & y & y^2\\
\hline
1 & 0 & 0 & -2\\
x & 0 & -2 & 3\\
x^2 & -2 & 3 & -1
\end{array}
$$
mais alors $B(1)$ n'est plus inversible. Nous allons cependant montrer, grâce aux relations~(\ref{relations}), que si on projette les deux polynômes de Bezout sur le quotient $A$ alors les matrices $B(g)$ et $B(1)$ sont redimensionnées à la même taille, $B(1)$ est inversible et la formule~(\ref{BG}) s'applique. Illustrons le procédé sur l'exemple ci-dessus. Puisque $B(1)$ n'est pas inversible, on peut trouver une combinaison linéaire de colonnes qui s'annule, ici c'est la troisième colonne qui est nulle. En la multipliant par $x^3$, et en appliquant les relations~(\ref{relations}), on obtient que la troisième colonne de $B(x^3)$ est aussi nulle. Mais cette colonne vaut $-2 + 3x - x^2$ ce qui entraine que, dans le quotient,
$-2 + 3x - x^2 = 0$ (ce n'est pas une surprise car ce dernier polynôme n'est autre que $-f$; ceci est dû au fait que l'exemple choisi est particulièrement simple, mais nous verrons dans le cas multivariable que les relations nulles dans le quotient ainsi générées sont loin d'être triviales). En vue d'automatiser les calculs, traduisons le procédé précédent en termes d'algèbre matricielle.
Toujours sur le même exemple
\begin{align} \nonumber 
	\delta(x^3) &=
	\begin{bmatrix}
			1 & x & x^2
	\end{bmatrix}
	\begin{bmatrix}
		0 & 0 & -2 \\
		0 & -2 & 3 \\
		-2 & 3 & -1
	\end{bmatrix}
	\begin{bmatrix}
		1 \\
		y \\
		y^2
	\end{bmatrix} \\ \nonumber 
	\delta(x^3) &=
	\begin{bmatrix}
		1 & x & x^2
	\end{bmatrix}
	\begin{bmatrix}
		1 & 0 & 2 \\
		0 & 1 & -3 \\
		0 & 0 & 1
	\end{bmatrix}
	\begin{bmatrix}
		1 & 0 & -2 \\
		0 & 1 & 3 \\
		0 & 0 & 1
	\end{bmatrix}
	\begin{bmatrix}
		0 & 0 & -2 \\
		0 & -2 & 3 \\
		-2 & 3 & -1
	\end{bmatrix}
	\begin{bmatrix}
		1 \\
		y \\
		y^2
	\end{bmatrix} \\ \nonumber 
	\delta(x^3) &=
	\begin{bmatrix}
			1 & x & 2 - 3x + x^2
	\end{bmatrix}
	\begin{bmatrix}
		4 & -6 & 0 \\
		-6 & 7 & 0 \\
		-2 & 3 & -1
	\end{bmatrix}
	\begin{bmatrix}
		1 \\
		y \\
		y^2
	\end{bmatrix} \\ \nonumber 
\end{align}
En résumé, nous multiplions le vecteur d'indices
$\begin{bmatrix}
	1 & x & x^2
\end{bmatrix}$ à droite par la transformation de Gauss
$P =
\begin{bmatrix}
	1 & 0 & 2 \\
	0 & 1 & -3 \\
	0 & 0 & 1
\end{bmatrix}$
et les deux matrices de Bezout $B(1)$ et $B(g)$ à gauche par $P^{-1}$. Les polynômes de Bezout, écrits sous forme de tableaux, deviennent alors
$$
\begin{array}{c|ccc}
	\delta(1) & 1 & y & y^2\\
	\hline
	1 & -3 & 1 & 0\\
	x & 1 & 0 & 0\\
	2 - 3x + x^2 & 0 & 0 & 0
\end{array}
\hspace{1cm}
\begin{array}{c|ccc}
	\delta(x^3) & 1 & y & y^2\\
	\hline
	1 & 4 & -6 & 0 \\
	x & -6 & 7 & 0 \\
	2 - 3x + x^2 & -2 & 3 & -1
\end{array}
$$
Ce que disent les relations~(\ref{relations}) c'est que la troisième colonne de $B(x^3)$ est nulle dans le quotient $A$, c'est à-dire $-2 + 3x - x^2 = 0$, (on reconnait l'égalité $-f = 0$). On a donc
$$
\delta(1) = \begin{bmatrix}
	1 & x
\end{bmatrix}
\begin{bmatrix}
	-3 & 1 \\
	1 & 0
\end{bmatrix}
\begin{bmatrix}
	1 \\
	y
\end{bmatrix}$$
$$\delta(x^3) = \begin{bmatrix}
	1 & x
\end{bmatrix}
\begin{bmatrix}
	4 & -6 \\
	-6 & 7
\end{bmatrix}
\begin{bmatrix}
	1 \\
	y
\end{bmatrix} + (2 - 3x + x^2)(-2 + 3y - y^2)$$
puis, en projetant $\delta(1), \delta(g)$ sur $A \otimes A$
$$
\begin{array}{c|cc}
	\delta(1) & 1 & y \\
	\hline
	1 & -3 & 1 \\
	x & 1 & 0
\end{array}
\hspace{1cm}
\begin{array}{c|cc}
	\delta(x^3) & 1 & y \\
	\hline
	1 & 4 & -6  \\
	x & -6 & 7
\end{array}
$$
Nous avons bien obtenu des matrices de Bezout de même taille, avec $B(1)$ inversible. Formons alors le quotient
\begin{equation}
	B(x^3)B(1)^{-1} =
	\begin{bmatrix}
		-6 & -14 \\
		7 & 15
	\end{bmatrix}
	= X^3
\end{equation}
ce qui est bien conforme à la formule de Barnett généralisée.

\begin{rem}
On peut remplacer la matrice de Gauss par toute matrice permettant de transformer une colonne donnée en une colonne possédant un seul élément non nul, comme par exemple une matrice orthogonale de Householder. C'est le choix qui sera fait dans l'implémentation en Octave proposée en \cite{jp_code}.
\end{rem}

\section{Cas multivariable}
\label{multivariable}

Dans le cas univariable, examiné à la section précédente, la structure de $A$ se compose d'une part d'une base, en l'occurence la base des monômes, d'autre part de la matrice compagnon, exprimant l'endomorphisme de multiplication par $x$ dans cette base. Ces deux éléments peuvent être obtenus soit directement par lecture des coefficients de $f$, soit à partir des matrices de Bezout $B(1), B(x)$.

Dans le le cas multivariable, développé dans cette section, ni une base ni les matrices compagnon (matrices des opérateurs
$x_j : \left\vert
\begin{array}{c}
A \mapsto A \\
h \mapsto x_jh
\end{array}
\right.$ dans la base) ne sont visibles directement sur les coefficients des polynômes de départ. En revanche nous allons montrer comment construire des matrices de Bezout $B(1), B(x_1), \cdots, B(x_n)$ à partir desquelles on peut obtenir une base et les matrices compagnon $X_j$ associées à la base obtenue. Commençons par fixer le cadre de travail.
Pour $n$ polynômes $f_1,\cdots, f_n$ en $n$ variables $x_1,\cdots, x_n$ à coefficients dans $\C$, considérons :
\begin{itemize}
\item $\C[x]$ l'anneau des polynômes en les variables $x = x_1,\cdots, x_n$
\item $\langle f \rangle$ l'idéal généré par la famille  $f = f_1,\cdots, f_n$
\item $V(f) = \{x \in \C^n : f(x) = 0\}$ la variété associée à $\langle f\rangle$
\item $A = \C[x]/\langle f\rangle$ l'algèbre quotient
\end{itemize}
Nous supposerons dorénavant que l'idéal $\langle f\rangle$ est {\bf zéro-dimensionel}, c'est-à-dire que $V(I)$ est fini ou, de façon équivalente \cite[p.~234]{clo}, que $A$ est de {\bf dimension finie} en tant qu'espace vectoriel sur $\C$. Ceci est bien sûr toujours le cas lorsque $n = 1$.

\subsection{Construction des polynômes et des matrices de Bezout}

\subsubsection{Extension de la définition \ref{def_bez} au cas multivariable}

\begin{defn}
Soit $x^\gamma = x_1^{\gamma_1}\cdots x_n^{\gamma_n} \in \C[x]$ un monôme.
Introduisons un nouveau jeu de variables $y = y_1,\cdots, y_n$ et considérons, pour tous $i, j = 1\cdots n$, le rapport
\begin{equation}
\label{finite_diff}
\delta_{i,j}(x^\gamma) = \dfrac{y_j^{\gamma_j}f_i(y_1,\cdots, y_{j-1},x_j,\cdots,x_n) - x_j^{\gamma_j}f_i(y_1,\cdots,y_j,x_{j+1},\cdots,x_n)}{x_j - y_j}
\end{equation}
qui est un polynôme en les variables $x, y$. Nous obtenons une matrice de différences finies $\Delta(x^\gamma) = (\delta_{ij}(x^\gamma))_{ij}$, qui est à la matrice jacobienne ce que le taux d'accroissement est à la dérivée.
Le {\bf polynôme de Bezout} du monôme $x^\gamma$ est par définition
\begin{equation}
	\delta(x^\gamma) = det(\Delta(x^\gamma))
\end{equation}
qui est un élément de $\C[x, y]$. Pour un polynôme général $g = \sum_\gamma g_\gamma x^\gamma \in \C[x]$, on étend la définition précédente par linéarité $\delta(g) = \sum_\gamma g_\gamma \delta(x^\gamma)$.
En développant $\delta(g) = \sum_{\alpha,\beta} b_{\alpha\beta} x^\alpha y^\beta$ comme une somme de monômes de $\C[x, y]$, et en notant $\bold{x}$ et $\bold{y}$ les familles de tous les monômes en $x$ et $y$ apparaissant dans ce développement, nous définissons la {\bf matrice de Bezout} $B(g) = [b_{\alpha\beta}]$, c'est à dire que l'on a la relation suivante, similaire à la relation~(\ref{pmB}), entre polynôme et matrice de Bezout
\begin{equation}
	\delta(g) = \bold{x} B(g) \bold{y}^T
\end{equation}
\end{defn}
Illustrons la définition précédente à l'aide d'un exemple, tiré de \cite{jpc}.
\begin{exmp}
\label{bez_multi}
Fixons $n = 2$ et considérons $f_1 = x_1^2 + x_1x_2^2 - 1, f_2 = x_1^2x_2 + x_1$.
Nous allons calculer les matrices de Bezout $B(1), B(x_1), B(x_2)$,  qui vont servir à la construction des matrices compagnon $X_1, X_2$, comme nous le verrons plus loin. Pour commencer, calculons à partir des formules (\ref{finite_diff})
\begin{align}
\Delta(1) &=
\begin{pmatrix}
x_1 + x_2^2 + y_1 & x_2y_1 + y_1y_2 \\
1 + x_1x_2 + x_2y_1 & y_1^2
\end{pmatrix} \nonumber  \\
\Delta(x_1) &=
\begin{pmatrix}
1 + x_1y_1 & x_2y_1 + y_1y_2 \\
1 + x_1x_2 + x_2y_1 & y_1^2
\end{pmatrix} \nonumber  \\
\Delta(x_2) &=
\begin{pmatrix}
x_1 + x_2^2 + y_1 & 1 - y_1^2 + x_2y_1y_2 \\
1 + x_1x_2 + x_2y_1  & -y_1
\end{pmatrix} \nonumber
\end{align}
dont le déterminant fournit les polynômes de Bezout
\begin{align}
\delta(1) &= -x_2y_1 - x_1x_2^2y_1 + x_1y_1^2 + y_1^3 - y_1y_2 - x_1x_2y_1y_2 - x_2y_1^2y_2 \nonumber \\
\delta(x_1) &=  y_1^2 - x_1x_2^2y_1^2 + x_1y_1^3 - x_1x_2y_1^2y_2 \nonumber \\
\delta(x_2) &= -1 - x_1x_2 - x_1y_1 -x_2y_1 - x_2^2y_1 + x_1x_2y_1^2 + x_2y_1^3 - x_2y_1y_2 - x_1x_2^2y_1y_2 - x_2^2y_1^2y_2\nonumber
\end{align}
Les familles de mônomes apparaissant dans ces polynômes sont
$\bold{x} = (1, x_2, x_2^2, x_1, x_1x_2, x_1x_2^2)$ et $\bold{y} = (1, y_1, y_1y_2, y_1^2, y_1^2y_2, y_1^3)$.
Les polynômes de Bezout s'écrivent sous forme de tableaux faisant apparaitre les matrices de Bezout
$$\begin{array}{c|cccccc}
	\delta(1) & 1 & y_1 & y_1y_2 & y_1^2 & y_1^2y_2 & y_1^3 \\
	\hline
	1 &  &  & -1 &  &  & 1\\
	x_2 &  & -1 &  &  & -1 & \\
	x_2^2 &  &  &  &  &  & \\
	x_1 &  &  &  & 1 &  & \\
	x_1x_2 &  &  & -1 &  &  & \\
	x_1x_2^2 &  & -1 &  &  &  &
\end{array}$$

$$\begin{array}{c|cccccc}
	\delta(x_1) & 1 & y_1 & y_1y_2 & y_1^2 & y_1^2y_2 & y_1^3 \\
	\hline
	1 &  &  &  & 1 &  & \\
	x_2 &  &  &  &  &  & \\
	x_2^2 &  &  &  &  &  & \\
	x_1 &  &  &  &  &  & 1\\
	x_1x_2 &  &  &  &  & -1 & \\
	x_1x_2^2 &  &  &  & -1 &  &
\end{array}
\hspace{0.2cm}
\begin{array}{c|cccccc}
	\delta(x_2) & 1 & y_1 & y_1y_2 & y_1^2 & y_1^2y_2 & y_1^3 \\
	\hline
	1 & -1 &  &  &  &  & \\
	x_2 &  & -1 & -1 &  &  & 1\\
	x_2^2 &  & -1 &  &  & -1 & \\
	x_1 &  & -1 &  &  &  & \\
	x_1x_2 & -1 &  &  & 1 &  & \\
	x_1x_2^2 &  &  & -1 &  &  &
\end{array}$$

\end{exmp}
\begin{rem}
Ici, contrairement au cas univariable, les listes $\bold{x}$ et $\bold{y}$ ne sont pas des bases de $A$. Nous verrons plus loin qu'elles sont cependant génératrices et comment on peut en extraire des bases.
\end{rem}

\subsubsection{Calcul effectif des matrices de Bezout}
Dans l'exemple précédent, les polynômes de Bezout s'obtiennent en calculant le déterminant des matrices $\Delta(1), \Delta(x_1), \Delta(x_2)$, qui sont de taille $2$ et dont les coefficients sont des polynômes en $x_1, x_2$. Si le nombre de variables $n$ ou le degré des polynômes $f_i$ augmente alors ce calcul peut devenir difficile car les coefficients des matrices $\Delta(x_k)$ ne sont pas numériques et on ne peut donc pas appliquer la méthode du pivot de Gauss. Un moyen de résoudre cette difficulté est de procéder par évaluation-interpolation :
\begin{enumerate}
\item
on estime à priori l'ensemble des monômes qui vont apparaitre dans le résultat $\delta(x^\gamma)$
\item
on évalue $\Delta(x_k)$ sur un ensemble adéquat $U \times V$ de multi-points de Fourier $u = (u_1,\cdots, u_n) \in U$ et $v = (v_1,\cdots, v_n) \in V$
\item
pour chaque point $(u, v) \in U\times V$ on calcule le déterminant numérique de $\Delta(x_k)(u, v)$ par la méthode du pivot de Gauss.
\item
pour finir on interpole l'ensemble des valeurs obtenues par le polynôme cherché $\delta(x_k)$.
\end{enumerate}
Pour implémenter cet algorithme concrètement on doit préciser l'ensemble des monômes de $\delta(x_k)$ ainsi que les points de Fourier utilisés pour l'évaluation de $\delta(x_k)$. Prenons l'exemple d'un système polynomial $f$ de multidegré $(d_1, \cdots, d_n)$, c'est-à-dire que pour tous $i, j = 1..n$ le degré de $f_i$ en la variable $x_j$ est inférieur ou égal à $d_j$. Fixons un entier $k$ compris entre $0$ et $n$ et adoptons la convention que $x_0 = 1$. On voit facilement que $\delta(x_k)$, polynôme en $x, y$, est de multidegré $(d_1, 2d_2, \cdots, nd_n)$ en $x$ et de multidegré $(nd_1, (n-1)d_2, \cdots, d_n)$ en $y$.
 Pour l'évaluation de $\delta(x_k)$ aux points de Fourier $(u, v) \in U\times V$ nous choisirons donc $U = \prod_{j=1..n} U_j$ o\`u $U_j$ est l'ensemble des racines complexes de $X^{jd_j} - 1$. De même nous choisirons $V = \prod_{j=1..n} V_j$ de façon que $U_j$ et $V_j$ soient disjoints, afin que le dénominateur ne s'annule jamais dans la formule (\ref{finite_diff}). Ceci est réalisé par exemple lorsque $V_j$ est l'ensemble des racines complexes de $X^{(n-j+1)d_j} - \theta_j$ avec $\theta_j = e^{i\pi/j}$. Les considérations précédentes nous permettent maintenant d'écrire l'algorithme~\ref{fourierPoints} présenté ci-dessous fournissant les ensembles $U$ et $V$.
\begin{algorithm}
\caption{Construction des ensembles $U, V$ de points de Fourier servant à l'évaluation du polynôme de Bezout $\delta(x_k)$, $k = 0, \cdots, n$.}
\label{fourierPoints}
\begin{algorithmic}
\Function{fourierPoints}{$d$} \Comment{multidegré $d = (d_1,\cdots,d_n)$}
\For{$j=1..n$} \Comment{construction des facteurs $U_j, V_j$}
\State $U_j \gets$ ensemble des racines de $X^{jd_j}-1$
\State $V_j \gets$ ensemble des racines de $X^{(n-j+1)d_j}-e^{i\pi/j}$
\EndFor
\State $U \gets \prod_{j=1..n}U_j$
\State $V \gets \prod_{j=1..n}V_j$
\State \textbf{return} $U, V$
\EndFunction
\end{algorithmic}
\end{algorithm}
Les ensembles de points de Fourier $U$ et $V$ peuvent alors être utilisés par l'algorithme \ref{evaluationMatrix} suivant pour construire la matrice d'évaluation du polynôme de Bezout.
\begin{algorithm}[H]
\caption{Construction de la matrice $C^{(k)}$ d'évaluation du polynôme de Bezout $\delta(x_k)$}
\label{evaluationMatrix}
\begin{algorithmic}
\Function{evaluation}{$f, k$} \Comment{$f = (f_1,\cdots,f_n)$ système polynomial}
\State $U, V \gets \textsc{fourierPoints}(d)$ \Comment{$d$ multidegré de $f$}
\State $D \gets \prod_{j=1..n}jd_j$
\State $C^{(k)} \gets \textsc{zeros}(D, D)$
\For{$(u, v) \in U\times V$}
      \State $\Delta \gets \textsc{zeros}(n, n)$
   		\For{$i, j=1..n$}
      		\State $\Delta_{i,j} \gets \delta_{i,j}(x_k)(u, v)$ \Comment{$\delta_{i,j}(x_k)$ défini à la formule (\ref{finite_diff})}
   		\EndFor
		\State $C^{(k)}_{u, v} \gets \textsc{det}(\Delta)$
	\EndFor
\State \textbf{return} $C^{(k)}$
\EndFunction
\end{algorithmic}
\end{algorithm}
Ayant noté $C^{(k)}$ la matrice d'évaluation du polynôme de Bezout $\delta(x_k)$, notons $B^{(k)}$ la matrice de Bezout $B(x_k)$.
La matrice $B^{(k)} = \left[b^{(k)}_{\alpha\beta}\right]_{\alpha\beta}$ est définie par $\delta^{(k)}(x, y) = \sum_{\alpha,\beta} b^{(k)}_{\alpha\beta} x^\alpha y^\beta$. On a donc $C^{(k)}_{u,v} = \delta^{(k)}(u, v) = \sum_{\alpha,\beta} b^{(k)}_{\alpha\beta} u^\alpha v^\beta$, ce qui s'écrit comme produit de matrices
$\left[C^{(k)}_{u,v}\right]_{u,v} = \left[u^\alpha\right]_{u,\alpha} \left[b^{(k)}_{\alpha,\beta}\right]_{\alpha, \beta} \left[v^\beta\right]_{v, \beta}^T$. Définissons alors les matrices de Fourier $F_u = \left[ u^\alpha \right]_{u, \alpha}$
 et $F_v = \left[ v^\beta \right]_{v, \beta}$. On obtient la relation d'évaluation-interpolation entre les matrices $B^{(k)}$ et $C^{(k)}$
$$C^{(k)} = F_uB^{(k)} F_v^T$$
 Grâce au choix des points de Fourier fait dans l'algorithme \ref{fourierPoints} les matrices $F_u$ et $F_v$ sont unitaires et la matrice $B^{(k)}$ s'obtient alors facilement par la relation
 \begin{equation}
 B^{(k)} = F_u^*C^{(k)} \overline{F_v}
 \end{equation}
La construction des matrices de Bezout, inspirée des considérations précédentes, a été implémentée en Numpy et publiée sur le site \cite{jp_code}.

\subsection{Formules de Barnett et structure de l'algèbre quotient.}
Dans l'hypothèse où l'idéal est zéro-dimensionel, la dimension de l'algèbre quotient $A = \C[\bold{x}]/\langle f\rangle$ est finie, ce qui assure l'existence d'une base et de matrices compagnon $X_1,\cdots, X_n$ (matrices des opérateurs de multiplication par les variables dans la base considérée). Nous allons montrer que le même procédé mis en oeuvre dans la section \ref{Bar_gen}, consistant en manipulations sur les colonnes des matrices de Bezout $B(1), B(x_1), \cdots, B(x_n)$, permet ici aussi de construire une base de $A$ ainsi que les matrices compagnons associées. Rappelons tout d'abord un certain nombre de propriétés algébriques du polynôme $\delta(1)$ et des matrices de Bezout $B(x_k)$.

\subsubsection{Propriétés algébriques du polynôme $\delta(1)$ et de la matrice $B(1)$}
Les propriétés qui suivent sont de nature algébrique et sont données sans démonstration. Le lecteur intéressé pourra consulter les détails dans \cite{jpc}. Comme dans la proposition \ref{Barnett}, définissons de nouvelles familles d'éléments de $A$ par les produits vecteur-matrice :
\begin{equation}
		\hat{\bold{x}}_k  =  \bold{x}B(x_k), \quad k=0\cdots n
\end{equation}
avec la convention de notation habituelle $\hat{\bold{x}}_0 = \hat{\bold{x}}$.
\begin{exmp}
En reprenant l'exemple \ref{bez_multi} nous avons
\begin{equation}
	\begin{array}{lll}
		\hat{\bold{x}}_0 & = & (0, -x_2 - x_1x_2^2, -1 - x_1x_2, x_1, -x_2, 1) \\
		\hat{\bold{x}}_1 & = & (0, 0, 0, -1 - x_2^2, -x_1x_2, x_1) \\
		\hat{\bold{x}}_2 & = & (-1 - x_1x_2, -x_2 - x_2^2 - x_1, - x_2 - x_1x_2^2, x_1x_2, -x_2^2, x_2)
	\end{array}
\end{equation}
\end{exmp}

\begin{prop}
\label{xj} (admise, démonstration dans \cite{jpc}).
Pour tout $k=1\cdots n$ on a
\begin{equation}
    \hat{\bold{x}}_0x_k = \hat{\bold{x}}_k
\end{equation}
\end{prop}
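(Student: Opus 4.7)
The plan is to establish the congruence $x_k\,\delta(1) \equiv \delta(x_k)$ in $\mathbb{C}[x,y]$ modulo the ideal $\langle f_1,\dots,f_n\rangle$, where both sides are viewed as polynomials in $y$ whose coefficients lie in $\mathbb{C}[x]$. Because $\hat{\bold{x}}_0 = \bold{x} B(1)$ and $\hat{\bold{x}}_k = \bold{x} B(x_k)$ record exactly the $y$-coefficients of $\delta(1)$ and $\delta(x_k)$ in the common monomial basis $\bold{y}$, such a congruence, read coefficient by coefficient in $y$, yields the vector equality $\hat{\bold{x}}_0 x_k = \hat{\bold{x}}_k$ in $A^m$. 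This is the direct multivariable analogue of the strategy used in Proposition~\ref{relations_prop}.

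The first step is a column-by-column comparison of $\Delta(x_k)$ and $\Delta(1)$. With $\gamma = (0,\dots,1,\dots,0)$, the $1$ sitting in position $k$, definition~(\ref{finite_diff}) yields $\delta_{i,j}(x_k) = \delta_{i,j}(1)$ for every $j\neq k$, so the two matrices agree outside column $k$. For the $k$-th column, a short algebraic manipulation of the numerator in~(\ref{finite_diff}), obtained by adding and subtracting $x_k f_i(y_1,\dots,y_{k-1},x_k,x_{k+1},\dots,x_n)$, gives the identity
\begin{equation*}
\delta_{i,k}(x_k) = x_k\,\delta_{i,k}(1) - F_i, \qquad F_i := f_i(y_1,\dots,y_{k-1},x_k,x_{k+1},\dots,x_n).
\end{equation*}
By multilinearity of the determinant in column $k$, this immediately produces $\delta(x_k) = x_k\,\delta(1) - \det(\Delta'')$, where $\Delta''$ coincides with $\Delta(1)$ outside column $k$ and has $k$-th column $(F_1,\dots,F_n)^T$.

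The crux of the proof, and the step I view as the main obstacle, is showing that $\det(\Delta'')$ lies in $\langle f_1,\dots,f_n\rangle \cdot \mathbb{C}[x,y]$. The key observation is a telescoping identity already hidden in~(\ref{finite_diff}): summing $(x_j - y_j)\,\delta_{i,j}(1)$ for $j=1,\dots,k-1$ collapses to $f_i(x_1,\dots,x_n) - F_i$, so that
\begin{equation*}
F_i = f_i(x) - \sum_{j=1}^{k-1}(x_j - y_j)\,\delta_{i,j}(1).
\end{equation*}
Interpreted at the matrix level, this says that the elementary column operations \emph{add $(x_j-y_j)\cdot(\text{col }j)$ to column $k$} for $j=1,\dots,k-1$ preserve the determinant of $\Delta''$ and transform its $k$-th column into the vector $(f_1(x),\dots,f_n(x))^T$. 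Expanding along that new column yields $\det(\Delta'') = \sum_{i=1}^{n} (-1)^{i+k} f_i(x)\,M_{i,k}$, where $M_{i,k}$ is the corresponding minor of $\Delta(1)$. Every term belongs to the ideal, so $\det(\Delta'')\equiv 0$ in $A[y]$. This gives $\delta(x_k)\equiv x_k\,\delta(1)$ modulo $\langle f\rangle$, and matching $y$-coefficients yields the announced identity $\hat{\bold{x}}_0 x_k = \hat{\bold{x}}_k$.
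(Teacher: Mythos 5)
Your argument is correct. Note first that the paper itself contains no proof of this proposition: it is stated as \emph{admise}, with the démonstration deferred to \cite{jpc}, so there is no in-paper argument to compare against; what you have written is a self-contained proof that extends the univariate Proposition~\ref{relations_prop} to the multivariable Bezoutian. The key steps all check out: from~(\ref{finite_diff}) one has $\delta_{i,j}(x_k)=\delta_{i,j}(1)$ for $j\neq k$ and $\delta_{i,k}(x_k)=x_k\,\delta_{i,k}(1)-F_i$ with $F_i=f_i(y_1,\dots,y_{k-1},x_k,\dots,x_n)$, so multilinearity in column $k$ gives $\delta(x_k)=x_k\,\delta(1)-\det(\Delta'')$; your telescoping identity $F_i=f_i(x)-\sum_{j<k}(x_j-y_j)\,\delta_{i,j}(1)$, implemented as determinant-preserving column operations, turns the $k$-th column of $\Delta''$ into $(f_1(x),\dots,f_n(x))^T$, and Laplace expansion shows $\det(\Delta'')$ lies in the ideal generated by $f_1(x),\dots,f_n(x)$ in $\C[x,y]$; reading $x_k\,\delta(1)\equiv\delta(x_k)$ coefficient by coefficient in $y$ then yields $\hat{\bold{x}}_0x_k=\hat{\bold{x}}_k$ in $A$. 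The one point worth stating explicitly is the bookkeeping convention you use implicitly: $B(1)$ and $B(x_k)$ must be indexed by common families $\bold{x},\bold{y}$ (take the union of the monomial supports and pad with zeros), exactly as the paper does in the univariate case and in Example~\ref{bez_multi}; with that convention the coefficient-matching step is valid. This style of argument --- column manipulations of the finite-difference matrix $\Delta$ --- is essentially the route followed in the cited literature for such Bezoutian congruences, so your proof supplies the details the paper omits rather than taking a genuinely different path.
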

Les relations ci-dessus sont faciles à vérifier sur l'exemple \ref{bez_multi}. Jusqu'à maintenant les cas univariable et multivariable sont très similaires, sauf sur un point: dans le cas multivariable les familles $\bold{x}$ et $\hat{\bold{x}}$ ne sont plus nécessairement des bases de $A$. On a cependant la propriété suivante

\begin{prop} (admise, démonstration dans \cite{jpc}).
Chacune des familles $\bold{x}$ et $\hat{\bold{x}}$ est génératrice dans $A$.
\end{prop}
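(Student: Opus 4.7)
Le plan est de plonger le problème dans le produit tensoriel $A\otimes A$, où vit naturellement le Bezoutien $\delta(1)$, et d'exploiter le fait que, dans le cadre zéro-dimensionnel en intersection complète, $\delta(1)$ y représente une forme bilinéaire non-dégénérée sur $A$. Les familles $\bold{x}$ et $\hat{\bold{x}}$ apparaîtront comme les deux « projections » de ce tenseur, et leur caractère générateur découlera alors d'un simple argument d'algèbre linéaire.

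Première étape : écrire
$$\delta(1) = \bold{x}\,B(1)\,\bold{y}^T = \sum_{\alpha,\beta} b_{\alpha\beta}\, x^\alpha \otimes y^\beta \quad \text{dans } A\otimes A,$$
où $x^\alpha$ et $y^\beta$ désignent les classes modulo $\langle f\rangle$, et constater que ce tenseur admet les deux présentations symétriques
$$\delta(1) = \sum_\alpha x^\alpha \otimes \hat{y}_\alpha = \sum_\beta \hat{x}_\beta \otimes y^\beta,$$
où $\hat{x}_\beta = \sum_\alpha b_{\alpha\beta}\,x^\alpha$ est la $\beta$-ième composante de $\hat{\bold{x}} = \bold{x}B(1)$ (et de même pour $\hat{y}_\alpha$). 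Deuxième étape : via l'identification $A\otimes A \cong \mathrm{Hom}(A^*, A)$, l'élément $\delta(1)$ définit une application linéaire $\Phi : A^* \to A$. La première présentation fournit l'inclusion $\mathrm{Im}\,\Phi \subseteq \mathrm{Vect}(\bold{x})$, la seconde $\mathrm{Im}\,\Phi \subseteq \mathrm{Vect}(\hat{\bold{x}})$. Si l'on montre que $\Phi$ est surjective, ces deux inclusions deviennent des égalités et la proposition est démontrée.

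Le cœur de la preuve, et principal obstacle, consiste donc à établir la surjectivité de $\Phi$, c'est-à-dire la non-dégénérescence de la forme bilinéaire sur $A$ codée par $\delta(1)$. Il s'agit du théorème classique de Scheja--Storch / Kunz pour les intersections complètes zéro-dimensionnelles démontré dans \cite{jpc} : pour l'algèbre de Gorenstein $A$, le Bezoutien $\delta(1)$ se réduit dans $A\otimes A$ à l'élément canonique $\sum_i e_i \otimes e_i^*$ associé à toute base $\{e_i\}$ de $A$ et à sa duale pour l'accouplement résiduel. Une preuve autonome passerait par la décomposition de $A$ en produit d'algèbres artiniennes locales (restes chinois, rendus possibles par la zéro-dimensionnalité), puis par le calcul du Bezoutien local en chaque point $\zeta \in V(f)$ : en une racine simple, la non-annulation du jacobien $\det(\partial f_i/\partial x_j)(\zeta)$ fournit directement la non-dégénérescence, le cas des racines multiples relevant de la théorie fine de la dualité pour les algèbres locales de Gorenstein -- c'est précisément cette étape qui justifie le renvoi à \cite{jpc}.
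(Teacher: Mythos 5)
Le point essentiel à noter est que l'article lui-même ne démontre pas cette proposition : elle y est explicitement admise et renvoyée à \cite{jpc}. Votre réduction d'algèbre linéaire est correcte et suit la route standard de cette référence (et de \cite{bm}) : projeter $\delta(1)$ dans $A\otimes A$, écrire le tenseur sous les deux formes $\sum_\alpha x^\alpha\otimes\hat{y}_\alpha=\sum_\beta\hat{x}_\beta\otimes y^\beta$, le contracter en une application $\Phi : A^*\to A$ dont l'image est contenue à la fois dans $\mathrm{Vect}(\bold{x})$ et dans $\mathrm{Vect}(\hat{\bold{x}})$, et conclure que ces deux familles engendrent $A$ dès que $\Phi$ est surjective. (Petite imprécision sans gravité : $\delta(1)$ ne définit pas une forme bilinéaire sur $A$ mais un élément de $A\otimes A$, c'est-à-dire une forme sur $A^*\times A^*$, ou encore l'application $\Phi$ ; la non-dégénérescence voulue est la bijectivité de $\Phi$, et en dimension finie la surjectivité suffit.)

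La lacune est que toute la substance mathématique de l'énoncé se trouve précisément dans cette surjectivité, c'est-à-dire dans le théorème de dualité de type Scheja--Storch pour les intersections complètes zéro-dimensionnelles (le bezoutien représente l'élément canonique $\sum_i e_i\otimes e_i^*$ pour la dualité résiduelle de l'algèbre de Gorenstein $A$), que vous citez sans la démontrer ; l'esquisse que vous en donnez (décomposition par les restes chinois, non-annulation du jacobien aux racines simples, dualité de Gorenstein locale aux racines multiples) reste au stade du plan, et le cas des racines multiples — le seul difficile — est à nouveau renvoyé à \cite{jpc}. En l'état, votre texte constitue donc une réduction correcte du problème au bon théorème clé, au même niveau de complétude que l'article, mais pas une démonstration autonome de la proposition.
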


\subsubsection{Processus de réduction}
\label{sec:reduction_process}
La proposition précédente fournit un début de structure de l'algèbre $A$. Nous avons pour l'instant une famille génératrice $\bold{x}$ de $A$ ainsi que des matrices de Bezout $B(x_k), k = 0, \cdots, n$. Nous allons montrer comment, en appliquant le procédé matriciel décrit dans la section \ref{Bar_gen} à la famille génératrice $\bold{x}$ et aux matrices de Bezout $B(x_k)$, on peut fabriquer une base de $A$ et des matrices compagnon $X_k$. Illustrons les calculs à partir de l'exemple \ref{bez_multi}. Le rang de $B(1)$ est $5$. La première colonne de $B(x_1)$ est nulle mais celle de $B(x_2)$ ne l'est pas, ce qui fournit la relation dans le quotient $1 + x_1x_2 = 0$.
Multiplions $\bold{x}$ à droite par la matrice de Gauss $P$ dont la cinquième colonne vaut $(1, 0, 0, 0, 1, 0)^{T}$, et multiplions les matrices de Bezout à gauche par $P^{-1}$, ce qui revient à soustraire la cinquième ligne à la première.
Les matrices de Bezout $B(1), B(x_1), B(x_2)$ s'écrivent:
$$
\begin{array}{c|cccccc}
	B(1) & 1 & y_1 & y_1y_2 & y_1^2 & y_1^2y_2 & y_1^3 \\
	\hline
	1 &  &  &  &  &  & 1\\
	x_2 &  & -1 &  &  & -1 & \\
	x_2^2 &  &  &  &  &  & \\
	x_1 &  &  &  & 1 &  & \\
	1+x_1x_2 &  &  & -1 &  &  & \\
	x_1x_2^2 &  & -1 &  &  &  &
\end{array}$$
$$
\begin{array}{c|cccccc}
	B(x_1) & 1 & y_1 & y_1y_2 & y_1^2 & y_1^2y_2 & y_1^3 \\
	\hline
	1 &  &  &  & 1 & 1 & \\
	x_2 &  &  &  &  &  & \\
	x_2^2 &  &  &  &  &  & \\
	x_1 &  &  &  &  &  & 1\\
	1+x_1x_2 &  &  &  &  & -1 & \\
	x_1x_2^2 &  &  &  & -1 &  &
\end{array}
\hspace{0.2cm}
\begin{array}{c|cccccc}
	B(x_2) & 1 & y_1 & y_1y_2 & y_1^2 & y_1^2y_2 & y_1^3 \\
	\hline
	1 &  &  &  & -1 &  & \\
	x_2 &  & -1 & -1 &  &  & 1\\
	x_2^2 &  & -1 &  &  & -1 & \\
	x_1 &  & -1 &  &  &  & \\
	1+x_1x_2 & -1 &  &  & 1 &  & \\
	x_1x_2^2 &  &  & -1 &  &  &
\end{array}
$$
La première colonne de $B(x_2)$ contient maintenant un seul coefficient non nul, indexé par $1 + x_1x_2$. On peut donc, en projetant les trois bezoutiens sur $A_x$, supprimer la première colonne et la cinquième ligne dans les trois matrices:
$$
\begin{array}{c|ccccc}
	B(1) & y_1 & y_1y_2 & y_1^2 & y_1^2y_2 & y_1^3 \\
	\hline
	1  &  &  &  &  & 1 \\
	x_2  & -1 &  &  & -1 & \\
	x_2^2  &  &  &  &  & \\
	x_1  &  &  & 1 &  & \\
	x_1x_2^2  & -1 &  &  &  &
\end{array}$$
$$
\begin{array}{c|ccccc}
	B(x_1)  & y_1 & y_1y_2 & y_1^2 & y_1^2y_2 & y_1^3 \\
	\hline
	1  &  &  & 1 & 1 & \\
	x_2  &  &  &  &  & \\
	x_2^2  &  &  &  &  & \\
	x_1  &  &  &  &  & 1 \\
	x_1x_2^2  &  &  & -1 &  &
\end{array}
\hspace{0.2cm}
\begin{array}{c|ccccc}
	B(x_2) & y_1 & y_1y_2 & y_1^2 & y_1^2y_2 & y_1^3 \\
	\hline
	1  &  &  & -1 &  & \\
	x_2  & -1 & -1 &  &  & 1 \\
	x_2^2  & -1 &  &  & -1 & \\
	x_1  & -1 &  &  &  & \\
	x_1x_2^2 &  & -1 &  &  &
\end{array}
$$

La deuxième colonne de $B(1)$ est nulle, celle de $B(x_2)$ ne l'est pas. La relation est $x_2 + x_1x_2^{2} = 0$. La matrice $P$ est définie par sa cinquième colonne $(0, 1, 0, 0, 1)^{T}$. Le vecteur $\bold{x}$ devient $(1, x_2, x_2^{2}, x_1, x_2 + x_1x_2^{2})$. On soustrait la cinquième ligne à la deuxième. Les bezoutiens s'écrivent:
$$
\begin{array}{c|ccccc}
	B(1) & y_1 & y_1y_2 & y_1^2 & y_1^2y_2 & y_1^3 \\
	\hline
	1  &  &  &  &  & 1 \\
	x_2  &  &  &  & -1 & \\
	x_2^2  &  &  &  &  & \\
	x_1  &  &  & 1 &  & \\
	x_2 + x_1x_2^2  & -1 &  &  &  &
\end{array}$$
$$
\begin{array}{c|ccccc}
	B(x_1)  & y_1 & y_1y_2 & y_1^2 & y_1^2y_2 & y_1^3 \\
	\hline
	1  &  &  & 1 & 1 & \\
	x_2  &  &  & 1 &  & \\
	x_2^2  &  &  &  &  & \\
	x_1  &  &  &  &  & 1 \\
	x_2 + x_1x_2^2  &  &  & -1 &  &
\end{array}
\hspace{0.2cm}
\begin{array}{c|ccccc}
	B(x_2) & y_1 & y_1y_2 & y_1^2 & y_1^2y_2 & y_1^3 \\
	\hline
	1  &  &  & -1 &  & \\
	x_2  & -1 &  &  &  & 1 \\
	x_2^2  & -1 &  &  & -1 & \\
	x_1  & -1 &  &  &  & \\
	x_2 + x_1x_2^2 &  & -1 &  &  &
\end{array}
$$

La deuxième colonne de $B_2$ contient un seul coefficient non nul, en cinquième ligne, on peut donc supprimer les deuxièmes colonnes et les cinquièmes lignes:

$$
\begin{array}{c|cccc}
	B(1) & y_1 & y_1^2 & y_1^2y_2 & y_1^3 \\
	\hline
	1  &   &  &  & 1 \\
	x_2  &  &  & -1 & \\
	x_2^2  &  &  &  & \\
	x_1  &  & 1 &  &
\end{array}
\hspace{0.2cm}
\begin{array}{c|cccc}
	B(x_1)  & y_1 & y_1^2 & y_1^2y_2 & y_1^3 \\
	\hline
	1  &  & 1 & 1 & \\
	x_2  &  & 1 &  & \\
	x_2^2  &  &  &  & \\
	x_1  &  &  &  & 1
\end{array}
\hspace{0.2cm}
\begin{array}{c|cccc}
	B(x_2) & y_1 & y_1^2 & y_1^2y_2 & y_1^3 \\
	\hline
	1  &  & -1 &  & \\
	x_2  & -1 &  &  & 1 \\
	x_2^2  & -1 &  & -1 & \\
	x_1  & -1 &  &  &
\end{array}
$$
La première colonne de $B(1)$ est nulle, celle de $B(x_2)$ ne l'est pas. La relation est $x_2 + x_2^{2} + x_1 = 0$. La matrice $P$ est définie par sa quatrième colonne $(0, 1, 1, 1)^{T}$. Le vecteur $\bold{x}$ devient $(1, x_2, x_2^{2},  x_2 + x_2^{2} + x_1)$. On soustrait la quatrième ligne à la deuxième et à la troisième. Les bezoutiens s'écrivent:

$$
\begin{array}{c|cccc}
	B(1) & y_1 & y_1^2 & y_1^2y_2 & y_1^3 \\
	\hline
	1  &   &  &  & 1 \\
	x_2  &  & -1 & -1 & \\
	x_2^2  &  & -1 &  & \\
	x_2 + x_2^{2} + x_1  &  & 1 &  &
\end{array}$$
$$
\begin{array}{c|cccc}
	B(x_1)  & y_1 & y_1^2 & y_1^2y_2 & y_1^3 \\
	\hline
	1  &  & 1 & 1 & \\
	x_2  &  &  &  & \\
	x_2^2  &  &  &  & -1 \\
	x_2 + x_2^{2} + x_1  &  &  &  & 1
\end{array}
\hspace{0.2cm}
\begin{array}{c|cccc}
	B(x_2) & y_1 & y_1^2 & y_1^2y_2 & y_1^3 \\
	\hline
	1  &  & -1 &  & \\
	x_2  &  &  &  & 1 \\
	x_2^2  &  &  & -1 & \\
	x_2 + x_2^{2} + x_1  & -1 &  &  &
\end{array}$$

La première colonne de $B_2$ contient un seul coefficient non nul, en quatrième ligne, on peut donc supprimer les premières colonnes et les quatrièmes lignes:
$$
\begin{array}{c|ccc}
	B(1) & y_1^2 & y_1^2y_2 & y_1^3 \\
	\hline
	1  &  &  & 1 \\
	x_2  & -1 & -1 & \\
	x_2^2 & -1 &  &
\end{array}
\hspace{0.2cm}
\begin{array}{c|ccc}
	B(x_1) & y_1^2 & y_1^2y_2 & y_1^3 \\
	\hline
	1  & 1 & 1 & \\
	x_2  & 1 &  & -1\\
	x_2^2  &  &  & -1
\end{array}
\hspace{0.2cm}
\begin{array}{c|ccc}
	B(x_2) & y_1^2 & y_1^2y_2 & y_1^3 \\
	\hline
	1  & -1 &  & \\
	x_2  &  &  & 1 \\
	x_2^2  &  & -1 &
\end{array}$$
A ce stade, la matrice $B(1)$ est inversible et le processus de réduction est donc terminé. On vérifie que les familles $\bold{x} = (1, x_2, x_2^{2})$ et $\bold{y} = (y_1, y_1^{2}, y_1^{3})$ sont des bases de $A$, dont les bases de Horner associées sont $\hat{\bold{x}} = (-x_2-x_2^{2}, -x_{2}, 1)$ et
$\hat{\bold{y}} = (y_1^{3}, -y_1^{2}-y_1^{2}y_2, -y_1^{2})$. La dimension de $A$ est ici égale à~$3$.
D'une façon générale nous avons (\cite{jpc} p.57, \cite{bm}, \cite{tm})
\begin{prop}
	\label{conjecture}
Lorsque le processus de réduction est terminé, c'est-à-dire lorsque la matrice $B(1)$ est inversible et que toutes les matrices $B(x_k), k=0, \cdots, n$ sont de même taille et indexées par des familles de polynômes $\bold{x, y}$, alors chacune des familles $\bold{x, y}$ est une base de $A$.
\end{prop}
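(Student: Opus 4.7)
Mon plan s'articule en deux étapes. La première consiste à montrer que chaque itération du processus de réduction décrit en section~\ref{sec:reduction_process} préserve le fait que les familles $\bold{x}$ et $\bold{y}$ restent génératrices de $A$. La seconde consiste à identifier la taille terminale $d$ avec $\dim_\C A$, via un résultat classique sur le rang du bezoutien, ce qui forcera automatiquement ces familles génératrices à être libres.

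Pour l'invariance, je raisonnerais sur une étape unique. Supposons la $j$-ième colonne de $B(1)$ nulle. L'identité de la Proposition~\ref{xj} se traduit matriciellement par $\bold{x}\,B(1)\,x_k = \bold{x}\,B(x_k)$ dans $A^{|\bold{x}|}$, d'où $\bold{x}\,B(x_k)_{\cdot,j} = x_k\cdot\bold{x}\,B(1)_{\cdot,j} = 0$ dans $A$ pour tout $k$. Le polynôme $\bold{x}\,B(x_k)_{\cdot,j}$ représente donc l'élément nul de $A$ et fournit une nouvelle relation dans $\langle f\rangle$. On la matérialise par la transformation de Gauss $P$, qui place cette combinaison dans une ligne de $\bold{x}P$; cette ligne s'annule dans $A$, donc peut être supprimée sans altérer le sous-espace engendré dans $A$. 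Quant à la colonne $j$ de $B(1)$, sa nullité traduit simplement que le terme correspondant ne contribue pas au tenseur $\delta(1)$ vu dans $A\otimes A$, et on peut l'éliminer sans modifier ce tenseur. L'invariance du caractère générateur de $\bold{y}$ s'obtient par raisonnement symétrique (transformation de Gauss à droite, version duale de la Proposition~\ref{xj}).

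Pour la seconde étape, j'invoquerais le théorème classique rappelé dans \cite{bm}, \cite{tm} et \cite[p.~57]{jpc} : dans le cadre d'une intersection complète zéro-dimensionelle, le bezoutien $\delta(1)$ correspond, sous la dualité induite par la forme résiduelle de Grothendieck, à l'endomorphisme identité de $A$. Par conséquent, pour toute présentation de $\delta(1)$ dans des familles génératrices $\bold{x}$ et $\bold{y}$, la matrice $B(1)$ a pour rang exactement $\dim_\C A$. La réduction préserve ce rang (elle ne supprime que des contributions nulles au tenseur). À la terminaison, $B(1)$ étant inversible de taille $d\times d$, on en déduit $d = \operatorname{rang}(B(1)) = \dim_\C A$.

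La conclusion est alors immédiate : $\bold{x}$ est génératrice dans $A$ avec $\dim_\C A$ éléments, donc c'est une base; de même pour $\bold{y}$. Le point technique le plus délicat du schéma est précisément l'égalité $\operatorname{rang}(B(1)) = \dim_\C A$, emprunt classique à la théorie algébrique du bezoutien multivarié. Une voie plus autonome consisterait à construire les matrices $X_k = B(1)^{-1}B(x_k)$, à établir (comme extension multivariable de la formule de Barnett généralisée~(\ref{BG})) qu'elles commutent et annulent chaque $f_i$, puis à en déduire un homomorphisme $A\to\operatorname{End}(\C^d)$ faisant de $\C^d$ un $A$-module cyclique; cela fournirait $d\le\dim_\C A$, qui, combiné à l'inégalité réciproque donnée par le caractère générateur de $\bold{x}$, clôt l'argument.
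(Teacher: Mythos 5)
Votre schéma général (famille génératrice de cardinal $\dim_\C A$ $\Rightarrow$ base) est raisonnable, et l'invariance du caractère générateur de $\bold{x}$ à chaque étape (la ligne supprimée est indexée par un polynôme nul dans $A$) est correcte. Mais l'étape décisive --- l'identification de la taille finale $d$ avec $\dim_\C A$ --- repose sur un énoncé inexact. Il est faux que, pour toute présentation $\delta(1) = \bold{x}B(1)\bold{y}^T$ en familles génératrices, la matrice numérique $B(1)$ soit de rang $\dim_\C A$, et faux que la réduction préserve ce rang : dans l'exemple \ref{bez_multi} le rang initial de $B(1)$ vaut $5$ alors que $\dim_\C A = 3$, bien que $\bold{x}$ soit génératrice (et $\bold{y}$ aussi, puisqu'elle contient la famille finale) ; et le rang n'est pas préservé ($5 \to 3$ ici, $331 \to 330$ dans l'exemple de taille $384$). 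Ce que donne la théorie (dualité du résidu, \cite{jpc}, \cite{bm}, \cite{tm}), c'est que l'image de $\delta(1)$ dans $A\otimes A$ est un tenseur non dégénéré de rang $\dim_\C A$ ; mais ce fait, joint à l'inversibilité de $B(1)$ et au seul caractère générateur des familles, n'entraîne pas $d = \dim_\C A$. Contre-exemple abstrait : $\dim_\C A = 1$, $d = 2$, images des deux familles égales à $(1,1)$, $B(1) = I_2$ inversible ; le tenseur associé vaut $2\,(1\otimes 1)$, de rang $1$, alors que $d = 2$. La conclusion exige donc d'exploiter la forme particulière des suppressions effectuées (colonnes dont les coefficients, lus dans $A$, sont nuls) et la dualité de façon plus fine ; c'est précisément le contenu non trivial des références citées, que votre argument de comptage de rang ne remplace pas.

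Deux autres points. D'abord le côté $\bold{y}$ : dans le processus de la section \ref{sec:reduction_process}, aucune transformation de Gauss n'est appliquée à droite ; on supprime des colonnes indexées par des monômes en $y$ qui ne sont pas nuls dans $A$, au motif que leurs coefficients s'annulent dans $A$ pour toutes les matrices $B(x_k)$. Votre ``raisonnement symétrique'' ne décrit donc pas le processus réellement effectué et ne justifie pas que $\bold{y}$ reste génératrice ; c'est exactement là qu'intervient l'hypothèse zéro-dimensionnelle (cf.\ la remarque qui suit la proposition \ref{conjecture} sur les tailles finales pouvant différer sinon). Ensuite, la voie alternative que vous esquissez ($X_k = B(1)^{-1}B(x_k)$ commutent, annulent les $f_i$, d'où un $A$-module cyclique de dimension $d \le \dim_\C A$) serait une stratégie viable, mais en l'état elle est circulaire : ces propriétés constituent la proposition \ref{Barnett_multi}, énoncée après coup et sous l'hypothèse que $\bold{x}$ est déjà une base ; il faudrait les établir directement à partir de la proposition \ref{xj}, ce qui n'est pas fait. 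Notez enfin que l'article lui-même ne démontre pas cette proposition et renvoie à \cite{jpc} (p.~57), \cite{bm}, \cite{tm}.
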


\begin{rem}
Nous insistons sur le fait que la proposition précédente est valable uniquement dans l'hypothèse où l'idéal est zéro-dimensionnel. Lors de nos expériences nous avons pu observer que dans le cas contraire, il est possible d'obtenir à la fin du processus de réduction des matrices $B(1)$ de tailles différentes suivant que l'on utilise les relations en $x$ ou en $y$ lors du processus de réduction. Le cas échéant, cette différence de taille finale est un phénomène qui reste à éclaircir.
\end{rem}

\subsubsection{Formules de Barnett et matrices compagnon}
Reprenons l'exemple précédent et définissons les matrices $X_1, X_2$ par les quotients
\begin{equation}
	X_1 = B(x_1)B(1)^{-1} =
	\begin{bmatrix}
		0 & -1 & 0\\
		-1 & 0 & -1\\
		-1 & 0 & 0
	\end{bmatrix},\quad
	X_2 = B(x_2)B(1)^{-1} =
	\begin{bmatrix}
		0 & 0 & 1\\
		1 & 0 & 0\\
		0 & 1 & -1
	\end{bmatrix}
\end{equation}
On vérifie que $X_1, X_2$ sont les matrices de multiplication par les variables $x_1, x_2$ dans la base $\bold{x}$ et sont donc les matrices compagnon associées à la base $\bold{x}$. D'une façon générale nous avons:
\begin{prop}
\label{Barnett_multi}
Lorsque le processus de réduction est terminé et que les matrices de Bezout sont écrites dans des bases $\bold{x, y}$, alors les matrices compagnon $X_j$, c'est à dire les matrices de multiplication par $x_j$ dans la base $\bold{x}$, peuvent se calculer grâce aux {\bf formules de Barnett}
\begin{equation}
	X_j = B(x_j)B(1)^{-1}
\end{equation}
\end{prop}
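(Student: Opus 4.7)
Le plan consiste à reproduire, dans le contexte réduit, le schéma de démonstration de la proposition \ref{Barnett}. Après terminaison du processus de réduction (proposition \ref{conjecture}), les familles $\bold{x}, \bold{y}$ sont des bases de $A$, les matrices $B(x_k)$ sont toutes de même taille, et $B(1)$ est inversible.

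Je commencerais par vérifier que la relation $\bold{x} B(1) x_k = \bold{x} B(x_k)$ dans $A$ (proposition \ref{xj}) est préservée à chaque étape du processus de réduction. Les opérations effectuées sont de deux natures : d'une part, une transformation $\bold{x} \mapsto \bold{x} P$ accompagnée de $B(\cdot) \mapsto P^{-1} B(\cdot)$ pour une matrice inversible $P$, qui laisse l'identité invariante par construction ; d'autre part, la suppression simultanée d'une colonne $\beta$ et d'une ligne $\alpha$ lorsque $\bold{x}_\alpha = 0$ dans $A$ et que la colonne $\beta$ de $B(1)$ (et donc, par la proposition \ref{xj} appliquée à la colonne $\beta$, celle de tout $B(x_k)$) est nulle dans $A$. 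Cette troncature est licite car elle ne fait que retrancher des termes nuls modulo $\langle f \rangle$. C'est d'ailleurs le point le plus délicat du raisonnement : il faudra s'assurer méthodiquement que les identités matricielles modulo $A$ sont conservées après chaque suppression conjointe.

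Une fois cette préservation acquise, la preuve se conclut comme dans le cas univariable. En posant $\hat{\bold{x}} = \bold{x} B(1)$ (base de Horner, bien définie puisque $B(1)$ est inversible et $\bold{x}$ est une base), la proposition \ref{xj} préservée donne $\hat{\bold{x}} x_j = \bold{x} B(x_j)$. Substituant $\bold{x} = \hat{\bold{x}} B(1)^{-1}$, on obtient $\hat{\bold{x}} x_j = \hat{\bold{x}} B(1)^{-1} B(x_j)$ : la matrice de l'endomorphisme de multiplication par $x_j$ dans la base $\hat{\bold{x}}$ est donc $B(1)^{-1} B(x_j)$. Le changement de base de $\hat{\bold{x}}$ vers $\bold{x}$, de matrice $B(1)^{-1}$, transforme alors cette matrice en $B(1) \bigl(B(1)^{-1} B(x_j)\bigr) B(1)^{-1} = B(x_j) B(1)^{-1}$, ce qui établit la formule de Barnett multivariable.
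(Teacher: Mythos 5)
Votre proposition est correcte, et il faut d'abord noter que l'article ne donne en réalité aucune démonstration de la proposition \ref{Barnett_multi} : il la vérifie sur l'exemple puis l'énonce en général, les propriétés structurelles sous-jacentes (propositions \ref{xj} et \ref{conjecture}) étant admises avec renvoi à \cite{jpc}. Vous suivez exactement la route que l'article laisse implicite : transporter la preuve univariable de la proposition \ref{Barnett} dans le cadre réduit, après avoir vérifié que la relation $\bold{x}B(1)x_k=\bold{x}B(x_k)$ survit aux opérations de réduction. Ce point, que vous identifiez à juste titre comme le seul délicat, se traite bien comme vous l'indiquez : la transformation $\bold{x}\mapsto\bold{x}P$, $B\mapsto P^{-1}B$ ne change pas les polynômes représentés par les colonnes ; la suppression d'une ligne dont l'indice est nul dans $A$ ne modifie pas ces polynômes modulo $\langle f\rangle$ ; et la suppression simultanée d'une même colonne dans toutes les matrices ne fait que retirer l'une des égalités (colonne par colonne) du système, ce qui ne demande d'ailleurs aucune justification particulière --- seule la suppression de ligne utilise la nullité dans $A$, alors que vous présentez les deux opérations sous le même argument. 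La conclusion par changement de base entre $\bold{x}$ et la base de Horner $\hat{\bold{x}}=\bold{x}B(1)$ est identique au cas univariable et correcte ; notez simplement qu'elle s'appuie de façon essentielle sur la proposition \ref{conjecture} (admise dans l'article) pour garantir que $\bold{x}$ est une base, donc que la matrice obtenue est bien celle de l'endomorphisme de multiplication par $x_j$ dans cette base.
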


\begin{rem}
Comme dans le cas univariable nous avons pour tout $j=1,\cdots,n$,\\
$B(x_j)^{T}B(1)^{-T}$ est la matrice de multiplication par $y_j$ dans la base $\bold{y}$ \\
$B(1)^{-1}B(x_j)$ est la matrice de multiplication par $x_j$ dans la base $\hat{\bold{x}}$ \\
$B(1)^{-T}B(x_j)^{T}$ est la matrice de multiplication par $y_j$ dans la base $\hat{\bold{y}}$
\end{rem}

\subsubsection{Calcul numérique des racines}
Comme dans le cas univariable (voir Proposition \ref{compan2roots}) les racines du système polynomial $f_1, \cdots, f_n$ s'obtiennent numériquement en calculant les valeurs propres des matrices compagnons (\cite{AS}). Dans cet exemple les matrices $X_1, X_2$ fournissent les valeurs propres
$$
\begin{array}{c|c}
	x_1 & x_2 \\
	\hline
	-1.32472  & 0.75488 \\
	0.66236 + 0.56228i & -0.87744 + 0.74486i \\
	0.66236 - 0.56228i & -0.87744 - 0.74486i
\end{array}
$$
Puisque l'algèbre $A$ est commutative, les matrices $X_1, X_2$ commutent et ont donc les mêmes vecteurs propres. Lors du calcul il faut donc faire attention d'ordonner les valeurs propres pour qu'elles correspondent aux mêmes vecteurs propres. Dans l'exemple précédent on vérifie facilement que les couples $(x_1, x_2)$ ci-dessus sont bien des approximations des racines du système $f_1 = x_1^2 + x_1x_2^2 - 1, f_2 = x_1^2x_2 + x_1$.

\subsection{Structure bloc-triangulaire et rang numérique de $B(1)$}

\begin{figure}[h]
  \caption{Matrice $B(1)$, sparsité}
  \label{fig:B1_sparsity}
  \includegraphics[width=15cm, height=8cm]{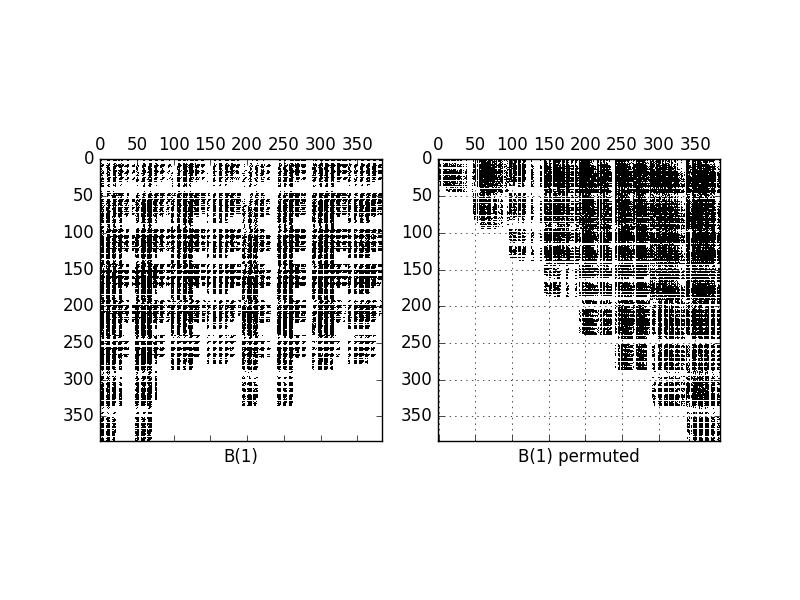}
\end{figure}
Dans le processus de réduction vu à la section \ref{sec:reduction_process} la première étape consiste à calculer le noyau de $B(1)$. Lorsque les coefficients des polynômes d'entrée sont entiers ou rationnels, ceci peut se faire de manière exacte au moyen d'un programme de calcul symbolique. La taille des entiers peut alors croître considérablement au cours des calculs et augmenter en conséquence le temps total de calcul et les besoins en mémoire du calculateur. Si par contre on veut effectuer l'ensemble des calculs en nombres flottants, ou si les coefficients d'entrée sont eux mêmes donnés sous forme numérique, alors on doit faire un calcul numérique du noyau. La méthode éprouvée pour cela, implémentée dans des packages d'algèbre linéaire numérique comme Matlab/Octave, Numpy ou Julia, est d'effectuer une factorisation QR ``rank revealing'' de $B(1)$, que nous appellerons factorisation QRP, c'est-à-dire accompagnée de pivots sur les colonnes. L'expérience montre que cette approche est souvent efficace mais peut s'avérer délicate à mettre en oeuvre si la taille de la matrice augmente. Montrons le sur un exemple. Nous choisissons $n =  4$ et un système polynomial $f$ de multidegré $[2 ,2 ,2 ,2]$. Seuls une quinzaine de monômes sont retenus pour chaque polynôme. Les coefficients, entiers, sont choisis aléatoirement entre $-t$ et $t$ avec, ici, $t=3$. Choisir une plus grande valeur de $t$ ne poserait aucun problème particulier si on utilise les matrices de Bezout mais on constate que le temps de calcul est excessivement long lorsqu'on utilise les bases de Grobner (voir Table \ref{tab:timings}).
Voici la liste des polynômes composant le système $f$ :\\
$f = [3x_0^2x_1^2x_2^2x_3^2 - x_0x_1^2x_2^2x_3^2 - 2x_0^2x_1^2x_2^2 - 3x_0^2x_1x_2x_3^2 + 3x_1^2x_2^2x_3^2 - x_0^2x_1x_2x_3 + 2x_0x_1x_2^2x_3 + x_1^2x_2^2x_3 + 2x_0^2x_2x_3^2 - 2x_0^2x_1x_2 + 3x_1x_2x_3^2 + x_2^2x_3^2 - 2x_3,\\ -2x_0^2x_1^2x_2^2x_3^2 + 3x_0^2x_1x_2^2x_3 - x_0^2x_1x_2x_3^2 + 2x_0x_1x_2^2x_3^2 + x_0^2x_1^2x_2 - x_0x_1^2x_2^2 - x_0x_2^2x_3^2 - 2x_0^2x_1^2 + x_0^2x_1x_3 - 2x_1x_2^2x_3 + 3x_1x_2x_3^2 - 3x_0x_1^2 + x_0x_2^2 - 3x_1x_2x_3 + 3x_0x_3^2 - 2x_0x_1,\\ -3x_0^2x_1^2x_2^2x_3^2 + 2x_0^2x_1x_2^2x_3^2 + 2x_0^2x_1^2x_2^2 - x_0x_1x_2^2x_3^2 + 2x_1^2x_2^2x_3^2 - x_0^2x_1x_2^2 - 3x_0^2x_1^2x_3 + 2x_0^2x_1x_3^2 - 2x_0x_1x_2x_3^2 + 2x_0^2x_1^2 - 2x_1x_2^2x_3 - 3x_0x_1^2 - 3x_1^2x_2 - x_0^2x_3 - x_2^2x_3 - 2x_2x_3^2 - 3x_0^2 - 3x_0x_2 + 2x_0x_3 - x_1x_3,\\ 3x_0^2x_1^2x_2 - x_0x_1^2x_2^2 + 2x_0^2x_1^2x_3 + 2x_1^2x_2^2x_3 - 2x_0x_2^2x_3^2 - 2x_1x_2^2x_3^2 - 3x_0^2x_1^2 + x_1x_2^2x_3 + 3x_0^2x_3^2 - x_0x_2x_3^2 + x_2^2x_3^2 + x_0^2x_1 + 2x_0x_1x_2 - x_1^2x_2 - 2x_0x_2^2 - x_0x_1x_3 + 3x_1x_3^2 - 2x_2 - 3]$ \\
 La matrice de Bezout $B(1)$ est de taille $384$ et possède une certaine structure, comme le montre la Figure \ref{fig:B1_sparsity}. Comme il parait difficile d'exploiter cette structure pour le calcul numérique du rang de la matrice $B(1)$, on doit recourir à une méthode numérique générale, par exemple une factorisation SVD ou une factorisation QRP ``rank revealing''. Choisissons cette deuxième méthode. Les termes diagonaux du facteur triangulaire $R$ sont triés en ordre décroissant, comme le montre la figure \ref{fig:B1_diag}.
\begin{figure}[h]
  \caption{Factorisation QRP de B(1), termes diagonaux}
  \label{fig:B1_diag}
  \includegraphics[width=15cm, height=8cm]{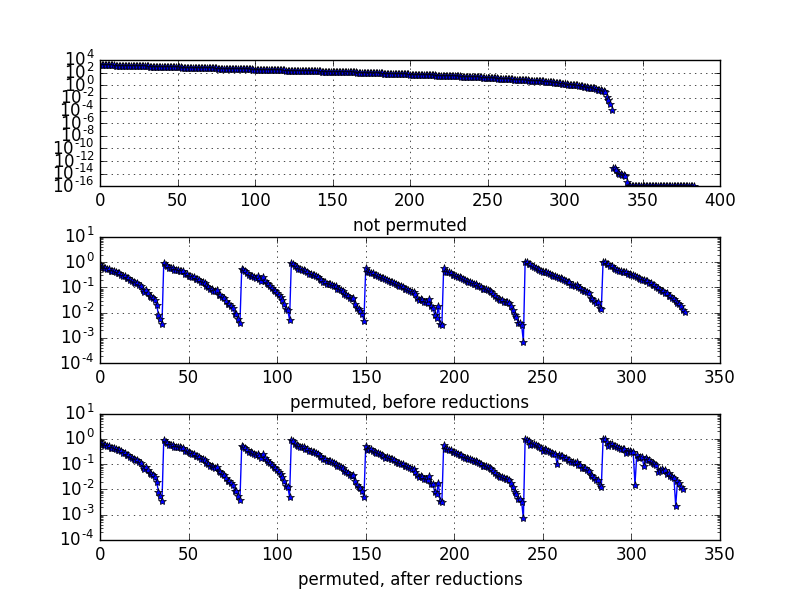}
\end{figure}
On s'aperçoit que les derniers termes non nuls décroissent vite, et qu'il peut devenir difficile de choisir un seuil au dessus duquel les termes diagonaux seront déclarés ``non nuls''. Les termes non nuls s'étendent de $10^4$ à $10^{-4}$. Le saut entre termes ``non-nuls'' et termes proches du epsilon machine a tendance à diminuer à mesure que la taille de la matrice augmente, ce qui rend le calcul du rang numérique difficile. Nous pouvons cependant améliorer, dans une certaine mesure, la situation précédente en exploitant une propriété de $B(1)$. En effet, en permutant lignes et colones de cette matrice d'une certaine façon, on peut arriver à une structure bloc-triangulaire de $B(1)$ (Figure \ref{fig:B1_diag}, subplot $2$). En appliquant à la matrice une factorisation QRP bloc après bloc, les termes diagonaux vont alors décroitre uniquement à l'intérieur de chaque bloc. La figure \ref{fig:B1_diag}, subplot $2$, montre la nouvelle disposition des termes diagonaux à la fin de la factorisation QRP, en traitant les blocs l'un après l'autre. Ici, les termes non nuls s'étendent de $10^0$ à $10^{-3}$. Le calcul du rang numérique est facilité et l'on trouve ici un rang égal à $331$, qui correspond au nombre de termes dans la figure \ref{fig:B1_diag}, subplot $2$. Enfin la figure \ref{fig:B1_diag}, subplot $3$, montre la distribution des termes diagonaux dans la matrice finale, une fois les réductions faites, comme expliqué dans la section \ref{sec:reduction_process}. On voit que la distribution des termes est très proche de celle précédent les réductions. Le rang de la nouvelle matrice est $330$, c'est la dimension du quotient $A$, d'après la proposition~\ref{conjecture}.

\pagebreak

\subsection{Calcul numérique des racines}

\begin{figure}[h]
    \caption{Processus de réduction exécuté en arithmétique exacte}
  \label{fig:roots}
  \includegraphics[width=15cm, height=6cm]{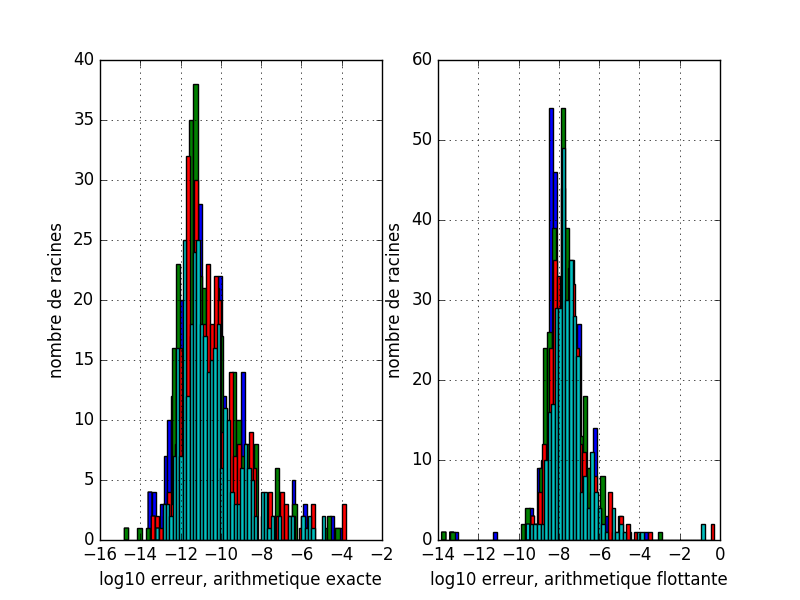}
\end{figure}
 Nous reprenons l'exemple ci-dessus. La matrice de Bezout $B(1)$, à coefficients entiers, est de taille . Après réductions on trouve que la dimension du quotient $A$ est . En calculant numériquement les valeurs propres des matrices compagnon $X_j = B(x_j)B(1)^{-1}$ on obtient les racines du système polynomial $f$. On vérifie la qualité de chacune des racines obtenues en lui appliquant les polynômes $f_i, i=1,\cdots,n$. Les résultats sont représentés sous forme d'histogramme (Figure \ref{fig:roots}) o\`u le logarithme décimal de l'erreur est porté en abscisse. Sur le plot de gauche le processus de réduction est effectué en arithmétique exacte, sur le plot de droite il est effectué en arithmétique flottante. On constate (Table \ref{tab:timings}) que le temps de calcul en arithmétique flottante est plus court mais au prix d'une dégradation sensible de la qualité des résultats. On peut noter aussi que le calcul de la dimension du quotient, effectué par la méthode des bases de Grobner (fonction vector\_space\_dimension() de Sage), demande un temps beaucoup
plus long que lorsqu'on utilise les matrices de Bezout. Il semble de plus que ce temps de calcul (bases de Grobner) augmente considérablement avec la taille des coefficients entiers du système polynomial, ce qui explique notre choix de restreindre ces coefficients entre $t=-3$ et $t=3$ dans notre expérience.

\begin{table}[h]
    \caption{timings}
\label{tab:timings}
\begin{tabular}{llllr}
  Arithmétique & Méthode & Processus & Software & Timing \\ \hline
  \multirow{4}{*}{flottante} & \multirow{4}{*}{Bezout} & Construction matrices de Bezout & NumPy & $2090$ ms \\ 
  & & Noyau de $B(1)$ & Octave & $117
$ ms \\
  & & Réduction matrices & Octave & $190
$ ms \\ 
  & & Valeurs propres & SciPy & $2614$ ms \\ \hline \hline
  \multirow{2}{*}{exacte} & Bezout & Réduction matrices & Sage & $6663$ ms \\ 
  & Grobner & Vérification dimension Algèbre & Sage & $827933$ ms
\end{tabular}
\end{table}

\section{Conclusion et perspectives}
Nous avons proposé une méthode de résolution numérique des systèmes polynômiaux en intersection complète. Cette méthode utilise exclusivement des techniques d'algèbre linéaire numérique. Le principe de la méthode est de nature algébrique mais fournit des racines dont on peut tester facilement la qualité numérique.

\end{document}